\def\@linkcolor{blue}
  \def\@anchorcolor{red}
  \def\@citecolor{red}
  \def\@filecolor{red}
  \def\@urlcolor{red}
  \def\@menucolor{red}
  \def\@pagecolor{red}
  \edef\x{%
    \edef\noexpand\x{%
      \endgroup
      \noexpand\toks@{%
        \catcode 96=\noexpand\the\catcode`\noexpand\`\relax
        \catcode 61=\noexpand\the\catcode`\noexpand\=\relax
      }%
    }%
    \noexpand\x
  }%
\newtheorem{Theorem}{Theorem}
\newtheorem{Lemma}{Lemma}
\newtheorem{Problem}{Problem}
\newtheorem{Remark}{Remark}
\newtheorem{Assumption}{Assumption}
\newtheorem{Definition}{Definition}
\DeclareMathOperator{\R}{\mathbb R}
\DeclareMathOperator*{\argmin}{arg\,min}
\newcommand{\bequ}{\begin{eqnarray}}
\newcommand{\eequ}{\end{eqnarray}}
\newcommand{\bb}{\boldsymbol}
\def\BibTeX{{\rm B\kern-.05em{\sc i\kern-.025em b}\kern-.08em
    T\kern-.1667em\lower.7ex\hbox{E}\kern-.125emX}}
\begin{document}

\title{\LARGE \bf Future-Focused Control Barrier Functions for Autonomous Vehicle Control}

\author{Mitchell Black$^1$ \and Mrdjan Jankovic$^2$ \and Abhishek Sharma$^2$ \and Dimitra Panagou$^3$
\thanks{We would like to acknowledge the support of the Ford Motor Company.}
\thanks{$^1$Department of Aerospace Engineering, University of Michigan, 1320 Beal Ave., Ann Arbor, MI 48109, USA; \texttt{mblackjr@umich.edu}.}
\thanks{$^2$Ford Research and Advanced Engineering, 2101 Village Rd., Dearborn, MI 48124, USA; \texttt{$\{$mjankov1, asharm90$\}$@ford.com}.}
\thanks{$^3$Dept. of Robotics and Dept. of Aerospace Engineering,  Univ. of Michigan, Ann Arbor, MI 48109, USA; \texttt{dpanagou@umich.edu}.}
}
\maketitle  

%%%%%%%%%%%%%%%%%%%%%%%%%%%%%%%%%%%%%%%%%%%%%%%%%%%%%%%%%%%%%%%%%%%%%%%%%%%%%%%%
%********************************** Abstract **********************************%
%%%%%%%%%%%%%%%%%%%%%%%%%%%%%%%%%%%%%%%%%%%%%%%%%%%%%%%%%%%%%%%%%%%%%%%%%%%%%%%%

\begin{abstract}\label{sec.abstract}
In this paper, we introduce a class of future-focused control barrier functions (ff-CBF) aimed at improving traditionally myopic CBF based control design and study their efficacy in the context of an unsignaled four-way intersection crossing problem for collections of both communicating and non-communicating autonomous vehicles. Our novel ff-CBF encodes that vehicles take control actions that avoid collisions predicted under a zero-acceleration policy over an arbitrarily long future time interval. In this sense the ff-CBF defines a virtual barrier, a loosening of which we propose in the form of a relaxed future-focused CBF (rff-CBF) that allows a relaxation of the virtual ff-CBF barrier far from the physical barrier between vehicles. We study the performance of ff-CBF and rff-CBF based controllers on communicating vehicles via a series of simulated trials of the intersection scenario, and in particular highlight how the rff-CBF based controller empirically outperforms a benchmark controller from the literature by improving intersection throughput while preserving safety and feasibility. Finally, we demonstrate our proposed ff-CBF control law on an intersection scenario in the laboratory environment with a collection of 5 non-communicating AION ground rovers.
\end{abstract}

%%%%%%%%%%%%%%%%%%%%%%%%%%%%%%%%%%%%%%%%%%%%%%%%%%%%%%%%%%%%%%%%%%%%%%%%%%%%%%%%
%******************************** Introduction ********************************%
%%%%%%%%%%%%%%%%%%%%%%%%%%%%%%%%%%%%%%%%%%%%%%%%%%%%%%%%%%%%%%%%%%%%%%%%%%%%%%%%

\section{Introduction}\label{sec.intro}

Vehicles with autonomous capabilities have grown increasingly prevalent on public roadways in recent years, and growth is forecasted to continue \cite{idc2020forecast}. Intersection scenarios are of keen interest due to the systemic dangers they pose; in fact, according to the U.S. Federal Highway Administration more than 50$\%$ of all fatal and injury crashes occur at intersections \cite{USFHA2021intersection}. Some have proposed alleviating this problem by using a centralized intersection manager (IM) to communicate safe entry/exit times to incoming connected autonomous vehicles (CAVs) \cite{dresner2008multiagent,yang2016isolated,khayatian2019crossroads+}. In contrast to schedulers, controllers offer better real-time reactivity to a dynamic, evolving environment. In the intersection setting, it is critical that control solutions are designed such that the overall system possesses both safety and liveness properties, i.e. that vehicles are able to traverse the intersection safely.

In both centralized and decentralized approaches, a common element in safe controller design is the use of control barrier functions (CBFs) \cite{ames2017control,Xiao2019Decentralized}. CBFs have been shown to be effective in compensating for some potentially unsafe control action in a variety of applications, including autonomous driving \cite{black2020quadratic, Chen2018Obstacle}, robotic manipulators \cite{singletary2019online, rauscher2016constrained}, and quadrotor control \cite{Li2018SafeLearning}. Studies have further demonstrated that CBFs are useful in maintaining safety in the presence of bounded disturbances \cite{jankovic2018robustcbf, deCastro2018rCBF} and model uncertainties \cite{Black2021Fixed, Taylor2019aCBF}. To date, however, a difficulty encountered when using CBF-based approaches is their tendency to myopically focus on present safety, potentially to the detriment of future performance. This drawback can be mitigated in part by using model predictive control (MPC), which solves an optimal control problem over a time horizon and implements the present control solution. While some recent work has demonstrated the efficacy of synthesizing CBFs with MPC frameworks for safe control\cite{Zeng2021MPCcbf}, such controllers often require the solution to a sequence of optimization problems at a given time, where the size of each optimization grows with the look-ahead horizon.

Motivated by these drawbacks, we introduce a new future-focused control barrier function (ff-CBF) for collision avoidance. Its fundamental underlying assumption is that vehicles seek to minimize unnecessary acceleration (or deceleration). This assumption is manifested as a constant velocity prediction of the positions of surrounding vehicles. We then use this to define the predicted minimum inter-agent distance over a future time interval and enforce that this distance remains above a safe threshold. In other words, the ff-CBF defines a zero super-level set containing vehicle states that are guaranteed to remain safe under a zero-acceleration (i.e. constant velocity) control policy over a period of time. It is worth noting that ff-CBFs are related to the predictive CBFs developed in parallel and introduced in \cite{Breeden2022Predictive}, the latter of which take on an increased computational load in exchange for applicability to more general trajectories rather than constant-velocity trajectories. In this sense, ff-CBFs (like predictive CBFs) are related to recent work on the development of backup CBF policies \cite{Gurriet2020Scalable,Chen2021Backup,Singletary2022Onboard}. Unlike backup and predictive CBF policies, however, our ff-CBF does not require numerical integration of the system trajectories forward in time, the computational demands of which also grow with the look-ahead horizon.
This allows us to take predicted future safety into account for the design of present actions while using a computationally-efficient quadratic program-based control law often used for CBF-based safe control \cite{black2020quadratic,Black2021Fixed,ames2017control,rauscher2016constrained}. 

Our future-focused CBF, however, defines a \textit{virtual} barrier which, in practice, may be violated without defying the \textit{physical} barrier between agents. As such, we introduce the notion of a relaxed future-focused control barrier function (rff-CBF) and show that enforcing forward invariance of its zero super-level set allows permeability of the virtual barrier while satisfying the physical one. The rff-CBF, therefore, permits the execution of safe control actions deemed inadmissible by the ff-CBF, resulting in a reduction in conservatism. In a numerical study, we examine the intersection crossing problem over a wide variety of initial conditions and highlight how an rff-CBF based controller provides the safety and performance benefits of ff-CBF based control while improving feasibility properties of a quadratic program (QP) based control law. We then implement the rff-CBF controller on a collection of ground rovers in a lab environment, and demonstrate its success in safely driving non-communicating vehicles through an unsignaled intersection.

The paper is organized as follows. Section \ref{sec.math prelim} introduces some preliminaries, including set invariance and QP-based control. We formalize the problem under consideration in Section \ref{sec.problem} and introduce our future-focused CBF in Section \ref{sec.main results}. Section \ref{sec.case study} contains the results of our simulated and experimental trials, and in Section \ref{sec.conclusion} we conclude with final remarks and directions for future work.

%%%%%%%%%%%%%%%%%%%%%%%%%%%%%%%%%%%%%%%%%%%%%%%%%%%%%%%%%%
%************** Mathematical Preliminaries **************%
%%%%%%%%%%%%%%%%%%%%%%%%%%%%%%%%%%%%%%%%%%%%%%%%%%%%%%%%%%

\section{Mathematical Preliminaries}\label{sec.math prelim}

We use the following notation throughout the paper. $\mathbb R$ denotes the set of real numbers. $\|\cdot\|$ represents the Euclidean norm. $\mathcal{C}^r$ is the set of $r$-times continuously differentiable functions in all arguments. We write $\partial S$ for the boundary of a closed set $S$, and $\textrm{int}(S)$ for its interior. A function $\alpha$ is said to belong to class $\mathcal{K}_\infty$ if $\alpha(0)=0$ and $\alpha: \R \rightarrow \R$ is increasing on the interval $(-\infty,\infty)$. The Lie derivative of a function $V:\mathbb R^n\rightarrow \mathbb R$ along a vector field $f:\mathbb R^n\rightarrow\mathbb R^n$ at a point $x\in \mathbb R^n$ is denoted $L_fV(x) \triangleq \frac{\partial V}{\partial x} f(x)$. 

In this paper, we consider a collection of agents, $\mathcal{A}$, each of whose dynamics is governed by the following class of nonlinear, control-affine systems
\begin{equation}\label{eq: nonlinear control-affine system}
    \dot{\bb{x}}_i = f_i(\bb{x}_i(t)) + g_i(\bb{x}_i(t))\bb{u}_i(t), \quad \bb{x}_i(0) = \bb{x}_{i0},
\end{equation}
where $\bb{x}_i \in \R^n$ and $\bb{u}_i \in \mathcal{U}_i \subset \R^m$ denote the state and control vectors respectively for agent $i \in \mathcal{A}$, and where $f_i: \R^n \rightarrow \R^n$ and $g_i: \R^{m\times n} \rightarrow \R^n$ are locally Lipschitz in their arguments and not necessarily homogeneous across agents. The set $\mathcal{U}_i$ denotes the set of admissible control inputs, and it is assumed that $\mathcal{A}$ has cardinality $A$. A subset of agents $\mathcal{A}_c \subseteq \mathcal{A}$ is assumed to be communicating in that they exchange information (e.g. control inputs), whereas the remaining agents $\mathcal{A}_n = \mathcal{A} \setminus \mathcal{A}_c$ are non-communicating.

Given a continuously differentiable function $h_i: \R^n \rightarrow \R$, we define a safe set $S_i$ as
\begin{equation}\label{eq: safe set}
    S_i = \{\bb{x}_i \in \R^n \; | \; h_i(\bb{x}_i) \geq 0\},
\end{equation}
where $\partial S_i = \{\bb{x}_i \in \R^n \; | \; h_i(\bb{x}_i) = 0\}$ and $\textrm{int}(S_i) = \{\bb{x}_i \in \R^n \; | \; h_i(\bb{x}_i) > 0\}$ denote the boundary and interior of $S_i$. The trajectories of \eqref{eq: nonlinear control-affine system} remain safe, i.e. $\bb{x}_i(t) \in S_i$ for all $t \geq 0$, if and only if $S_i$ is \textit{forward-invariant}. The following constitutes a necessary and sufficient condition for forward invariance of a set $S_i$.

\begin{Lemma}[\hspace{-0.3pt}Nagumo's Theorem\cite{BLANCHINI1999SetInvariance}]\label{lma: nagumos thm}
Suppose that there exists $\bb{u}_i \in \mathcal{U}_i$ such that \eqref{eq: nonlinear control-affine system} admits a globally unique solution for each $\bb{x}_i(0) \in S_i$. Then, the set $S_i$ is forward-invariant for the controlled system \eqref{eq: nonlinear control-affine system} if and only if
\begin{equation}\label{eq: forward invariance}
    L_{f_i}h_i(\bb{x_i}) + L_{g_i}h_i(\bb{x_i})\bb{u}_i \geq 0, \; \forall \bb{x}_i \in \partial S_i.
\end{equation}
\end{Lemma}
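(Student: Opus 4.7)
The plan is to prove the two directions of the biconditional separately, with necessity being straightforward and sufficiency being the subtler direction that naturally calls for a contradiction argument.

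For necessity, I would fix any $\bb{x}_i \in \partial S_i$, so that $h_i(\bb{x}_i) = 0$, and consider the unique trajectory $\bb{x}_i(t)$ emanating from $\bb{x}_i$ under the admissible $\bb{u}_i$. Forward invariance of $S_i$ gives $h_i(\bb{x}_i(t)) \geq 0 = h_i(\bb{x}_i(0))$ for all $t \geq 0$, so the right-hand derivative at $t = 0$ is non-negative. Applying the chain rule to \eqref{eq: nonlinear control-affine system} identifies this derivative with $L_{f_i} h_i(\bb{x}_i) + L_{g_i} h_i(\bb{x}_i)\bb{u}_i$, which yields \eqref{eq: forward invariance}.

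For sufficiency, I would argue by contradiction: assume \eqref{eq: forward invariance} holds but there exists an initial condition $\bb{x}_i(0) \in S_i$ whose trajectory exits $S_i$. Define the first exit time $t^\star = \inf\{t > 0 : h_i(\bb{x}_i(t)) < 0\}$. By continuity of $h_i$ and of the trajectory, $h_i(\bb{x}_i(t^\star)) = 0$, so $\bb{x}_i(t^\star) \in \partial S_i$, and there is a sequence $t_n \downarrow t^\star$ with $h_i(\bb{x}_i(t_n)) < 0$. The mean value theorem on each $[t^\star, t_n]$ then produces points at which $\tfrac{d}{dt} h_i(\bb{x}_i(\cdot))$ is strictly negative; passing to the limit using continuity of $L_{f_i} h_i$ and $L_{g_i} h_i$ along the trajectory gives $\dot{h}_i(\bb{x}_i(t^\star)) \leq 0$, which is to be combined with the hypothesis $\dot{h}_i(\bb{x}_i(t^\star)) \geq 0$ at the boundary point.

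The main obstacle is precisely the equality case: the elementary argument above only forces $\dot{h}_i(\bb{x}_i(t^\star)) = 0$, which does not by itself preclude $h_i$ from decreasing immediately afterward. Resolving this cleanly, as in the reference \cite{BLANCHINI1999SetInvariance}, requires strengthening the argument using Dini derivatives together with a comparison principle, or equivalently invoking Nagumo's geometric formulation via the Bouligand tangent cone $T_{S_i}(\bb{x}_i)$ at boundary points (which, for $C^1$ $h_i$ with $\nabla h_i \neq 0$ on $\partial S_i$, reduces to the half-space $\{\bb{v} : \nabla h_i(\bb{x}_i)\cdot \bb{v} \geq 0\}$ so that \eqref{eq: forward invariance} is exactly $f_i(\bb{x}_i) + g_i(\bb{x}_i)\bb{u}_i \in T_{S_i}(\bb{x}_i)$). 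The technical crux is then the standard fact that the signed distance to $S_i$ satisfies a nonnegative Dini inequality along the closed-loop trajectory, ruling out a finite-time exit. This is the step I would defer to the cited reference rather than reproduce in detail.
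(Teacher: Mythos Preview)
The paper does not prove this lemma at all: it is stated as Nagumo's Theorem with a citation to \cite{BLANCHINI1999SetInvariance} and used as a preliminary, with no accompanying proof in the text. Your proposal therefore goes strictly beyond what the paper provides. As a sketch it is sound: the necessity direction is correct as written, and for sufficiency you correctly isolate the genuine difficulty (the elementary contradiction only yields $\dot h_i = 0$ at the first exit time, which is not enough) and correctly identify the remedy as the Dini-derivative/comparison-lemma argument or, equivalently, Nagumo's tangent-cone characterization. Since the paper itself simply defers the result to the reference, your decision to do the same for that final technical step is consistent with the paper's treatment.
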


One way to render a set forward-invariant is to use CBFs in the control design.
\begin{Definition}\hspace{-0.3pt}\cite[Definition 5]{ames2017control}\label{def: cbf}
    Given a set $S_i \subset \R^n$ defined by \eqref{eq: safe set} for a continuously differentiable function $h_i: \R^n \rightarrow \R$, the function $h_i$ is a \textbf{control barrier function} (CBF) defined on a set $D_i$, where $S_i \subseteq D_i \subset \R^n$, if there exists a function $\alpha \in \mathcal{K}_\infty$ such that
    \begin{equation}\label{eq: cbf condition}
        \sup_{\bb{u}_i \in \mathcal{U}_i}\left[L_{f_i}h_i(\bb{x_i}) + L_{g_i}h_i(\bb{x_i})\bb{u}_i\right] \geq -\alpha(h_i(\bb{x_i}))
    \end{equation}
    holds for all $\bb{x}_i \in D_i$.
\end{Definition}

% Specifically, Definition \ref{def: cbf} refers to a form of \textit{zeroing} CBFs, one that is of relative-degree\footnote{Relative-degree is the number of times a function must be differentiated with respect to the system dynamics before the control appears explicitly.} one with respect to the system dynamics. 
It is evident that \eqref{eq: cbf condition} reduces to \eqref{eq: forward invariance} when $\bb{x}_i \in \partial S_i$, thus if $h_i(\bb{x}(0)) \geq 0$ and $h_i$ is a CBF on $D_i$ then $S_i$ can be rendered forward-invariant. 
% For a full review of CBFs, including those of relative-degree greater than one, see \cite{Ames2019cbfsurvey}.
As such, it has become popular to include CBF conditions \eqref{eq: cbf condition} as linear constraints in a quadratic program (QP) based control law \cite{ames2017control, black2020quadratic}, etc. When agents in the system are cooperative and communicating, a centralized controller may be deployed as follows
\begin{subequations}\label{eq.centralized_cbf_qp}
\begin{align}
    \bb{u}^* = \argmin_{\bb{u} \in \mathcal{U}} \frac{1}{2}\|\bb{u}&-\bb{u}^0\|^2 \label{qp_J}\\
    \nonumber \textrm{s.t.} \quad \forall i, j &= 1,\hdots, A, \; i \neq j \\
    \phi_i + \gamma_i\bb{u}_i& \geq 0 \label{subeq.centralized_qp_solo_constraint}, \\
    \phi_{ij} + \bb{\gamma}_{ij}\bb{u}& \geq 0 \label{subeq.centralized_qp_joint_constraint}, 
\end{align}
\end{subequations}
where $\bb{u} = [\bb{u}_1,\hdots,\bb{u}_A]^T$ and $\bb{u}^0 = [\bb{u}_1^0,\hdots,\bb{u}_A^0]^T$ denote concatenations of the input and nominal input vectors respectively, and
\begin{subequations}\label{eq.lfh_lgh_abbrev}
    \begin{align}
        \phi_i &= L_{f_i}h_i(\bb{x}_i) + \alpha_i(h_i(\bb{x}_i)) \\
        \gamma_i &= L_{g_i}h_i(\bb{x}_i) \\
        \phi_{ij} &= L_{f_i}h_{ij}(\bb{x}_i, \bb{x}_j) + L_{f_j}h_{ij}(\bb{x}_i, \bb{x}_j) + \alpha_{ij}(h_{ij}(\bb{x}_i, \bb{x}_j)) \\
        \bb{\gamma}_{ij} &= [L_{g_1}h_{ij}(\bb{x}_i,\bb{x}_j) \hdots L_{g_A}h_{ij}(\bb{x}_i,\bb{x}_j)]
    \end{align}
\end{subequations}
where each $\alpha_i, \alpha_{ij} \in \mathcal{K}_\infty$
such that \eqref{subeq.centralized_qp_solo_constraint} represents an agent-specific constraint (e.g. speed limit) and \eqref{subeq.centralized_qp_joint_constraint} represents an inter-agent constraint (e.g. collision avoidance). Note that $\bb{\gamma}_{ij}$ is a row vector of all zeros except indices $i$ and $j$, denoted $\bb{\gamma}_{ij,[i]}$ and $\bb{\gamma}_{ij,[j]}$ respectively. If the agents are non-communicating, however, then a decentralized control law of the following form may be used:
\begin{subequations}\label{eq.decentralized_cbf_qp}
\begin{align}
    \bb{u}_i^* = \argmin_{\bb{u}_i \in \mathcal{U}_i} \frac{1}{2}\|\bb{u}_i&-\bb{u}_i^0\|^2 \label{qp_J}\\
    \nonumber \textrm{s.t.} \; \forall j = 1,\hdots,A, \; i\neq j \\
    \phi_i + \gamma_i\bb{u}_i& \geq 0 \label{subeq.decentralized_qp_solo_constraint}, \\
    \phi_{ij} + \bb{\gamma}_{ij,[i]}\bb{u}_i& \geq 0 \label{subeq.decentralized_qp_joint_constraint}, 
\end{align}
\end{subequations}
where 
% $\bb{\gamma}_{ij,[i]}$ denotes the i$^{th}$ index of $\bb{\gamma}_{ij}$, and 
\eqref{subeq.decentralized_qp_solo_constraint} and \eqref{subeq.decentralized_qp_joint_constraint} represent agent-specific and inter-agent constraints similar to \eqref{subeq.centralized_qp_solo_constraint} and \eqref{subeq.centralized_qp_joint_constraint}. As noted by \cite{Jankovic2021Collision}, collision avoidance is guaranteed under the centralized control scheme \eqref{eq.centralized_cbf_qp} whenever it is feasible, unlike the decentralized controller \eqref{eq.decentralized_cbf_qp} under which (for a generic CBF $h_{ij}$) no such guarantee exists even when used uniformly by all agents. In Section \ref{sec.case study}, we use forms of \eqref{eq.centralized_cbf_qp} and \eqref{eq.decentralized_cbf_qp} to solve versions of the intersection crossing problem outlined in Section \ref{sec.problem}.

%%%%%%%%%%%%%%%%%%%%%%%%%%%%%%%%%%%%%%%%%%%%%%%%%%%%%%%%%%
%************** Problem Formulation **************%
%%%%%%%%%%%%%%%%%%%%%%%%%%%%%%%%%%%%%%%%%%%%%%%%%%%%%%%%%%
\section{Problem Formulation}\label{sec.problem}

Let $\mathcal{F}$ be an inertial frame with a point $s_0$ denoting its origin. Consider a collection of vehicles $\mathcal{A}$ approaching an unsignaled four-way intersection, where the dynamics of the $i^\textrm{th}$ vehicle are modeled as
\begin{subequations}\label{eq: dynamic bicycle model}
\begin{align}
    \dot{x}_i &= v_{i}\left(\cos{\psi_i} - \sin{\psi_i}\tan{\beta_i}\right) \label{eq: dyn x} \\
    \dot{y}_i &= v_{i}\left(\sin{\psi_i} + \cos{\psi_i}\tan{\beta_i}\right) \label{eq: dyn y} \\
    \dot{\psi}_i &= \frac{v_{i}}{l_r}\tan{\beta_i} \label{eq: dyn psi} \\
    \dot{\beta}_i &= \omega_i \\
    \dot{v}_{i} &= a_{i},
\end{align}
\end{subequations}
where $x_i$ and $y_i$ denote the position of the center of gravity (c.g.) of the vehicle with respect to $s_0$, $\psi_i$ is the orientation of the body-fixed frame, $\mathcal{B}_i$, with respect to $\mathcal{F}$, $\beta_i$ is the slip angle\footnote{The slip angle is the angle between the velocity vector associated with a point in a frame and the orientation of the frame.} of the vehicle c.g. relative to $\mathcal{B}_i$ (we assume $|\beta_i|<\frac{\pi}{2}$), and $v_{i}$ is the velocity of the rear wheel with respect to $\mathcal{F}$. The state of vehicle $i$ is denoted by $\bb{z}_i = [x_i \; y_i \; \psi_i \; \beta_i \; v_{i}]^T$, and the full state is $\bb{z} = [\bb{z}_1 \hdots \bb{z}_A]^T$. The control input is $\bb{u}_i = [\omega_i \; a_{i}]^T$, where $a_{i}$ is the linear acceleration of the rear wheel and $\omega_i$ the angular velocity of the slip angle, $\beta_i$, which is related to the steering angle, $\delta_i$, via $\tan{\beta_i} = \frac{l_r}{l_r+l_f}\tan{\delta_i}$, where $l_f+l_r$ is the wheelbase with $l_f$ (resp. $l_r$) the distance from the c.g. to the center of the front (resp. rear) wheel. The model, depicted in Figure \ref{fig: bicycle model}, is a dynamic extension of the kinematic bicycle model described in \cite[Chapter 2]{Rajamani2012VDC}, and is often used for autonomous vehicles  \cite{Kong2015Kinematic}.

For safety, we consider that each vehicle must 1) obey the road speed limit and drive only in the forward direction, 2) remain inside the road boundaries, and 3) avoid collisions with all vehicles. The satisfaction of requirement 2) can be handled via nominal design of $\omega_i$, whereas we encode 1) and 3) with the following candidate CBFs:
\begin{align}
    h_{s,i}(\bb{z}_i) &= (v_{max} - v_{r,i})(v_{r,i}) \label{eq: CBF speed limit}\\
    h_{0,ij}(\bb{z}_i,\bb{z}_j) &= (x_i - x_j)^2 + (y_i - y_j)^2 - (2R)^2, \label{eq: CBF inter-agent safety}
\end{align}
where $v_{max}$ denotes the speed limit in m/s and $R$ is a safe radius in m. We note that \eqref{eq: CBF inter-agent safety} is widely used in the literature to encode collision avoidance \cite{Santillo2021pcca,Jankovic2021Collision}. Thus, $h_{s,i}$ and $h_{0,ij}$ define the following safe sets at time $t$: $S_{s,i}(t) = \{\bb{z}_i(t): h_{s,i}(\bb{z}_i(t)) \geq 0\}$ and $S_{0,ij}(t) = \{(\bb{z}_i(t),\bb{z}_j(t)): h_{0,ij}(\bb{z}_i(t),\bb{z}_j(t)) \geq 0\}$,
the intersection of which constitutes the safe set for a given vehicle, i.e. 
\begin{equation}
    S_i(t) = \{S_{s,i}(t) \cap S_{0,i}(t)\}, %S_{r,i}(t) \cap
\end{equation}
where $S_{0,i}(t) = \bigcap\limits_{j=1,j\neq i}^{N}S_{0,ij}(t) \vspace{2mm}$.

Before introducing the problem under consideration, we note that the dynamics \eqref{eq: dynamic bicycle model} under some predicted control policy $\hat{\bb{u}}_i$ may be expressed as
\begin{equation}\label{eq.zero_accel_dynamics}
    \dot{\hat{\bb{z}}}_i = f_i(\hat{\bb{z}}_i(t)) + g_i(\hat{\bb{z}}_i(t))\hat{\bb{u}}_i, \; \hat{\bb{z}}_i(t_0) = \bb{z}_i(t_0),
\end{equation}
where $\hat{\bb{z}}_i \in \R^n$ denotes the state predicted under the policy $\hat{\bb{u}}_i$. At any time instance, the predicted dynamics \eqref{eq.zero_accel_dynamics} may be propagated forward in time to determine a state prediction at some future time $\tau > t_0$. In this paper, we take $\hat{\bb{u}}_i$ to be the zero-acceleration policy, defined as
$\hat{\bb{u}}_i \triangleq [\hat\omega_i \; \hat a_i]^T = [0 \; 0]^T$.
\begin{Assumption}\label{ass.z0_safe}
    Let $0 < \bar{\tau} < \infty$. For all vehicles $i \in \mathcal{A}$ with dynamics governed by \eqref{eq: dynamic bicycle model}, assume that the predicted closed-loop trajectories of \eqref{eq.zero_accel_dynamics} under the zero-acceleration policy $\hat{\bb{u}}_i$ beginning at $t_0 = 0$ are safe over the interval $\tau \in [0, \bar\tau]$, i.e. $\hat{\bb{z}}_i(\tau) \in S_i(\tau)$ for all $\tau \in [0,\bar{\tau}]$, $\forall i \in \mathcal{A}$.
\end{Assumption}
\begin{figure}[!t]
    \centering
        \includegraphics[width=0.8\columnwidth,clip]{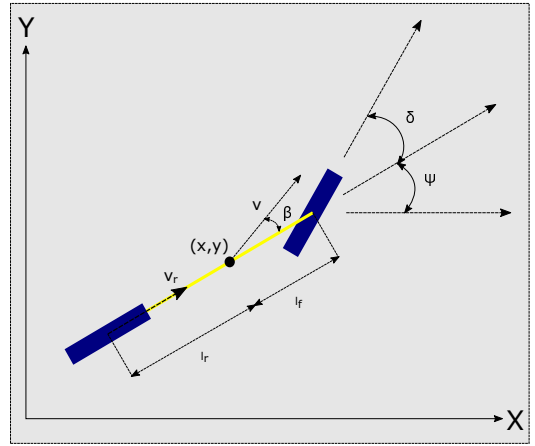}
    \caption{\small{Diagram of bicycle model described in \eqref{eq: dynamic bicycle model}.}}\label{fig: bicycle model}
    \vspace{-3mm}
\end{figure}
Assumption \ref{ass.z0_safe} states that no collisions shall occur between vehicles traveling with constant velocity within a time $\bar\tau$ of the initial time instant, i.e. no vehicles are on a collision course at the outset.
\begin{Problem}\label{prob: main problem}
    Consider a set of vehicles ($i \in \mathcal{A}$) whose dynamics are described by \eqref{eq: dynamic bicycle model}. Given Assumption \ref{ass.z0_safe}, design a control law, $\bb{u}_i^*(t) = [\omega_i^*(t) \; a_{i}^*(t)]^T$, such that, $\forall i \in \mathcal{A}$,
    \begin{enumerate}
        \item the closed-loop trajectories of \eqref{eq: dynamic bicycle model} remain safe for all time ($\bb{z}_i(t) \in S_i(t)$, $\forall t\geq0$), and
        \item at every time $t \geq 0$ the predicted closed-loop trajectories of \eqref{eq.zero_accel_dynamics} over the interval $\tau \in [t, t + \bar\tau]$ remain safe under the zero-acceleration policy $\hat{\bb{u}}_i$, i.e. $\hat{\bb{z}}_i(\tau) \in S_i(\tau)$, $\forall \tau \in [t, t + \bar{\tau}]$, $\forall t \geq 0$ under $\hat{\bb{u}}_i(\tau)$.
    \end{enumerate}
\end{Problem}
The look-ahead time $\bar\tau$ directly influences the set of allowable initial conditions, and vice versa: given $\bar\tau$, the set of allowable initial conditions is restricted to $\mathcal{Z}_0(\bar{\tau}) = \{\bb{z} \in \R^{An}: F(\bb{z},\bar{\tau}) \geq 0\}$, where $F: \R^{An}\times \R_{\geq 0} \rightarrow \R$ is negative if vehicles are predicted to collide under $\hat{\bb{u}_i}$ and non-negative otherwise. On the other hand, given the set of initial states $\mathcal{Z}_0$, the allowable values of $\bar\tau$ are those for which no collisions occur under $\hat{\bb{u}_i}$ over the initial time interval $[0, \bar\tau]$.

In the following section, we introduce a function that serves as a facet of our proposed solution to Problem \ref{prob: main problem}: a future-focused control barrier function (ff-CBF) suitable for QP-based controllers.

%%%%%%%%%%%%%%%%%%%%%%%%%%%%%%%%%%%%%%%%%%%%%%%%%%%%%%%%%%
%********************* Main Results *********************%
%%%%%%%%%%%%%%%%%%%%%%%%%%%%%%%%%%%%%%%%%%%%%%%%%%%%%%%%%%
\section{Future-Focused Control Barrier Functions}\label{sec.main results}

We first recall the nominal CBF for inter-agent safety \eqref{eq: CBF inter-agent safety}, and note that for two agents $i$ and $j$ it may be rewritten as
\begin{equation}\label{eq: nominal D cbf}
    h_{0,ij}(\bb{z}_i,\bb{z}_j) = D_{ij}^2(\bb{z}_i,\bb{z}_j) - (2R)^2,
\end{equation}
where $D_{ij}(\bb{z}_i,\bb{z}_j) = \sqrt{(x_i - x_j)^2 + (y_i - y_j)^2}$. Let the differential inter-agent position, $\bb{\xi}_{ij}$, velocity, $\bb{\nu}_{ij}$, and acceleration, $\bb{\alpha}_{ij}$, vectors be
\begin{align}
    \bb{\xi}_{ij} &= [\xi_{x,ij}, \; \xi_{y,ij}]^T = [x_i - x_j, \; y_i - y_j]^T,\nonumber\\
    \bb{\nu}_{ij} &= [\nu_{x,ij},\; \nu_{y,ij}]^T = [\dot{x}_i - \dot{x}_j,\; \dot{y}_i - \dot{y}_j]^T, \nonumber\\ 
    \bb{\alpha}_{ij} &= [\alpha_{x,ij}, \;\alpha_{y,ij}]^T = [\ddot{x}_i - \ddot{x}_j, \; \ddot{y}_i - \ddot{y}_j]^T \nonumber,
\end{align}
where we have omitted the argument $t$ for conciseness. In what follows, we also drop the subscript $ij$ from $D$, $\xi$, $\nu$, and $\alpha$. The critical observation is that the inter-agent distance at any arbitrary time, $T$, is $D(\bb{z}_i,\bb{z}_j,T) = \|\bb{\xi}(T)\|$. By assuming zero acceleration, we can use a linear model to predict that at time $T= t + \tau$, we will have that $\bb{\xi}(t+\tau) = \bb{\xi}(t) + \bb{\nu}(t)\tau$, which implies that the predicted distance at a time of $t+\tau$ is 
\small{
\begin{equation}\label{eq: D definition}
    \begin{aligned}
        D(\bb{z}_i,\bb{z}_j,t+\tau) = \sqrt{\xi_x^2 + \xi_y^2 + 2\tau(\xi_x\nu_x + \xi_y\nu_y) + \tau^2(\nu_x^2 + \nu_y^2)}. \nonumber
    \end{aligned}
\end{equation}
}\normalsize
Then, we may define the minimum predicted future distance between agents under the zero-acceleration policy as
\begin{equation}\label{eq: D minimum distance}
    D(\bb{z}_i,\bb{z}_j,t+\tau^*) = \|\bb{\xi}(t) + \bb{\nu}(t)\tau^*\| ,
\end{equation}
where
\begin{equation}\label{eq: tau star}
    \tau^* = \argmin_{\tau \in \R}D^2(\bb{z}_i,\bb{z}_j,t+\tau) = -\frac{\xi_x\nu_x + \xi_y\nu_y}{\nu_x^2 + \nu_y^2}.
\end{equation}

We now introduce our future-focused CBF for collision avoidance, the effect of which is depicted in Figure \ref{fig: ffcbf}:
\begin{equation}\label{eq: ffcbf}
    h_{\hat\tau,ij}(\bb{z}_i,\bb{z}_j) = D_{ij}^2(\bb{z}_i,\bb{z}_j,t+\hat{\tau}) - (2R)^2,
\end{equation}
where 
\begin{equation}\label{eq: tau hat}
    \hat{\tau} = \hat{\tau}^*K_0(\hat{\tau}^*) + (\bar{\tau} - \hat{\tau}^*) K_{\bar{\tau}}(\hat{\tau}^*),
\end{equation}
with $\bar{\tau}>0$ representing the length of the look-ahead horizon, $K_\delta(s) = \frac{1}{2} + \frac{1}{2}\tanh{\left(k(s - \delta)\right)}$, $k>0$, and 
\begin{equation}\label{eq: tau hat star}
    \hat{\tau}^* = -\frac{\xi_x\nu_x + \xi_y\nu_y}{\nu_x^2 + \nu_y^2 + \varepsilon},
\end{equation}
where $0 < \varepsilon \ll 1$.
Using \eqref{eq: tau hat} alleviates undesirable characteristics of \eqref{eq: tau star}, namely that $\tau^*$ may become unbounded. The inclusion of $\varepsilon$ makes \eqref{eq: tau hat star} well-defined, and $K_\delta(t)$ allows \eqref{eq: tau hat} to smoothly approximate $\hat{\tau}^*$ between $0$ and $\bar\tau$. 

It is worth mentioning that the ff-CBF is related to the backup CBFs used for safe control design in \cite{Gurriet2020Scalable,Chen2021Backup} in the following sense: whereas our formulation seeks to ensure that vehicles preserve safety in the future under the zero-acceleration policy $\hat{\bb{u}}_i$, past works have required a backup policy to actively intervene, e.g. to apply proportional braking, in order to guarantee safety.

% We now show that $h_{\hat\tau,ij}$ is indeed continuously differentiable, thus solidifying it as a candidate CBF.
\begin{figure}[!t]
    \centering
        \includegraphics[width=0.95\columnwidth,clip]{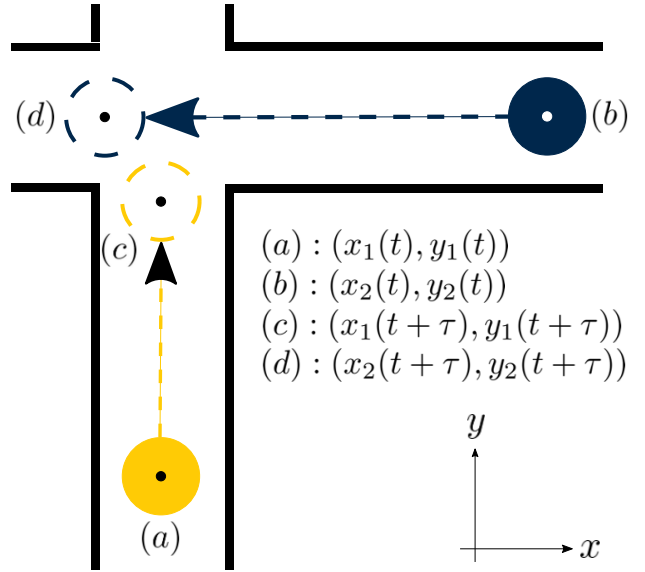}
    \caption{\small{Visualization of the effect of the ff-CBF. Whereas $h_{0,12}$ is evaluated based on the locations of vehicles $1$ and $2$ at time $t$, i.e. $(a)$ and $(b)$, $h_{\tau,12}$ judges safety based on the predicted future locations of the vehicles at time $t + \tau$, i.e. $(c)$ and $(d)$, allowing the present control to take action to avoid predicted future danger.}}\label{fig: ffcbf}
    \vspace{-3mm}
\end{figure}
\begin{Theorem}\label{thm: differentiable ffcbf}
    Consider two agents governed by the dynamics \eqref{eq: dynamic bicycle model} whose states are $\bb{z}_i$ and $\bb{z}_j$. Suppose that $h_{\hat\tau,ij}$ is defined by \eqref{eq: ffcbf}, with $\hat\tau$ given by \eqref{eq: tau hat}. Then, the following hold for all bounded $\bb{z}_i$, $\bb{z}_j$:
    \begin{enumerate}
        \item $h_{\hat\tau,ij} \in \mathcal{C}^1$ 
        \item $h_{\hat\tau,ij} \leq h_{0,ij}$ whenever $\hat\tau \leq 2\hat\tau^*$
    \end{enumerate}
\end{Theorem}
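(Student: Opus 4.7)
The plan is to verify each claim by direct computation, exploiting the compositional and polynomial structure of the defining formulas.

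For claim (i), I would argue smoothness block by block: $\xi_x,\xi_y$ are affine in the positions and thus $C^\infty$ in the state; $\nu_x,\nu_y$ are smooth functions of $(x_i,y_i,\psi_i,\beta_i,v_i)$ through \eqref{eq: dyn x}--\eqref{eq: dyn y}; $\hat\tau^*$ from \eqref{eq: tau hat star} is smooth because the $\varepsilon$ regularization keeps $\nu_x^2+\nu_y^2+\varepsilon \geq \varepsilon > 0$, so the denominator never vanishes; the sigmoid $K_\delta$ is $C^\infty$, hence $\hat\tau$ in \eqref{eq: tau hat} is smooth in the state; and finally $D_{ij}^2(\bb{z}_i,\bb{z}_j,t+\hat\tau)$ is a polynomial in $\hat\tau$ with coefficients smooth in the state. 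Composing these, $h_{\hat\tau,ij}\in C^\infty$, which in particular gives $C^1$.

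For claim (ii), I would expand the difference using the explicit polynomial form of $D^2$:
\[ h_{\hat\tau,ij} - h_{0,ij} = 2\hat\tau(\xi_x\nu_x + \xi_y\nu_y) + \hat\tau^2(\nu_x^2 + \nu_y^2). \]
The key identity, obtained by rearranging \eqref{eq: tau hat star}, is $\xi_x\nu_x+\xi_y\nu_y = -\hat\tau^*(\nu_x^2+\nu_y^2+\varepsilon)$. Substituting and grouping terms yields
\[ h_{\hat\tau,ij} - h_{0,ij} = (\nu_x^2+\nu_y^2)\,\hat\tau\,(\hat\tau - 2\hat\tau^*) \;-\; 2\varepsilon\,\hat\tau\,\hat\tau^*. \]
The sigmoidal blending in \eqref{eq: tau hat} essentially selects $\hat\tau \approx \max(0,\min(\hat\tau^*,\bar\tau))$, so $\hat\tau \geq 0$; combined with the hypothesis $\hat\tau \leq 2\hat\tau^*$, this forces $\hat\tau^* \geq 0$. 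Then $(\nu_x^2+\nu_y^2)\hat\tau(\hat\tau-2\hat\tau^*) \leq 0$ and $-2\varepsilon\hat\tau\hat\tau^* \leq 0$, so $h_{\hat\tau,ij} - h_{0,ij} \leq 0$.

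The main technical subtlety I anticipate is making the sign claim $\hat\tau \geq 0$ fully rigorous under the finite-$k$ $\tanh$ approximation, since $K_\delta$ lies strictly in $(0,1)$ rather than exactly clamping to $\{0,1\}$; the cleanest route is to take $k$ sufficiently large (or argue in the limit $k\to\infty$) so that $\hat\tau$ is nonnegative on the relevant region of state space. Everything else is essentially a polynomial identity plus a sign check, and the $\varepsilon$ correction poses no additional difficulty since, once $\hat\tau,\hat\tau^* \geq 0$, the term $-2\varepsilon\hat\tau\hat\tau^*$ already carries a favorable sign.
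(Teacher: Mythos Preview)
Your proposal is correct and tracks the paper's overall strategy, but differs in execution in two places worth noting. For part~(i), the paper proceeds by explicitly writing out $\dot h_{\hat\tau,ij}$, $\dot{\hat\tau}$, $\dot K_\delta$, and $\dot{\hat\tau}^*$ and checking that each is well-defined and continuous for bounded states; your compositional argument (each building block is $C^\infty$, the $\varepsilon$-regularization keeps the denominator bounded away from zero, hence the composition is $C^\infty$) is shorter and in fact yields a stronger conclusion. For part~(ii), both proofs start from the same expansion $h_{\hat\tau,ij}-h_{0,ij}=2\hat\tau(\xi_x\nu_x+\xi_y\nu_y)+\hat\tau^2(\nu_x^2+\nu_y^2)$, but the paper routes the sign analysis through the \emph{unregularized} minimizer $\tau^*$, obtaining $h_{\hat\tau,ij}\leq h_{0,ij}$ when $\hat\tau\leq 2\tau^*$, and then argues $\hat\tau^*<\tau^*$ for $\tau^*>0$ so that $\hat\tau\leq 2\hat\tau^*$ suffices; you instead substitute $\xi_x\nu_x+\xi_y\nu_y=-\hat\tau^*(\nu_x^2+\nu_y^2+\varepsilon)$ directly, which produces the extra $-2\varepsilon\hat\tau\hat\tau^*$ term with favorable sign and avoids the detour through $\tau^*$. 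Both arguments implicitly rely on $\hat\tau\geq 0$ (equivalently, on being in the regime $\hat\tau^*\geq 0$); you flag this honestly and handle it the same way the paper does in its post-theorem remark, namely by taking $k$ large so that the sigmoidal clamp is effectively exact.
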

\begin{proof}
For the first part, we must show that the derivative of \eqref{eq: ffcbf} is well-defined and continuous. Consider that from \eqref{eq: D minimum distance}, \eqref{eq: ffcbf}, and \eqref{eq: tau hat} we have
\begin{equation}\label{eq: dot h hat tau}
    \begin{aligned}
        \dot{h}_{\hat\tau,ij}(\bb{z}) &= 2\xi_x\nu_x + 2\xi_y\nu_y + 2\dot{\hat{\tau}}(\xi_x\nu_x + \xi_y\nu_y) \\
        &\quad + 2\hat{\tau}(\nu_x^2 + \nu_y^2 + \xi_x\alpha_x + \xi_y\alpha_y) \\
        &\quad + 2\hat{\tau}\dot{\hat{\tau}}(\nu_x^2 + \nu_y^2) + 2\hat{\tau}^2(\nu_x\alpha_x + \nu_y\alpha_y).
    \end{aligned}
\end{equation}
Since $\hat\tau$ is bounded by definition, it follows that $h_{\hat\tau,ij} \in \mathcal{C}^1$ when $\hat\tau \in \mathcal{C}^1$. From \eqref{eq: tau hat}, we have
\begin{equation}
\small{
\begin{aligned}
    \dot{\hat{\tau}} &=  \dot{\hat{\tau}}^*\left(K_0(\hat{\tau}^*) - K_{\bar{\tau}}(\hat{\tau}^*) \right) + \hat{\tau}^*(\dot{K}_0(\hat{\tau}^*) - \dot{K}_{\bar{\tau}}(\hat{\tau}^*)) + \bar\tau \dot{K}_{\bar\tau}(\hat\tau^*),\nonumber
\end{aligned}}\normalsize
\end{equation}
where
\begin{equation}
    \dot{K}_{\delta}(\hat\tau^*) = \dot{\hat{\tau}}^*\frac{k}{2}\textrm{sech}^2\left(k(\hat\tau^* - \delta)\right) \nonumber
\end{equation}
and from \eqref{eq: tau hat star}
\begin{equation}
    \dot{\hat{\tau}}^* = -\frac{\alpha_x(2\nu_x\tau^* + \xi_x) + \alpha_y(2\nu_y\tau^* + \xi_y) + \nu_x^2 + \nu_y^2}{\nu_x^2 + \nu_y^2 + \varepsilon}  \nonumber
\end{equation}
since $\dot{\hat{\tau}}^*$ and $\dot{K}_{\delta}(t)$ are bounded and continuous for bounded arguments, we have that $\hat{\tau} \in \mathcal{C}^1$ for bounded $\bb{z}_i$, $\bb{z}_j$. Thus, $h_{\hat\tau,ij} \in \mathcal{C}^1$.

For the second part, we observe that $h_{\hat\tau,ij}(\bb{z}) = h_{0,ij}(\bb{z}) + 2\hat\tau(\xi_x\nu_x + \xi_y\nu_y) + \hat\tau^2(\nu_x^2 + \nu_y^2)$, thus $h_{\hat\tau,ij}(\bb{z}_i,\bb{z}_j) \leq h_{0,ij}(\bb{z}_i,\bb{z}_j)$ whenever
\begin{equation}\label{eq: h term negative}
    \hat\tau \leq -2\frac{\xi_x\nu_x + \xi_y\nu_y}{\nu_x^2 + \nu_y^2} = 2\tau^*.
\end{equation}
With $\varepsilon$ in the denominator of \eqref{eq: tau hat star}, it follows that $\hat\tau^* < \tau^*$ whenever $\tau^* > 0$ (and $\hat\tau^* = 0$ when $\tau^* = 0$), thus the inequality in \eqref{eq: h term negative} holds whenever $\hat\tau \leq 2\hat\tau^*$. It follows, then, that $h_{\hat\tau,ij}(\bb{z}) \leq h_{0,ij}(\bb{z})$ whenever $\hat\tau \leq 2\hat\tau^*$.
\end{proof}

\begin{Remark}
The condition $\hat\tau \leq 2\hat\tau^*$ may be satisfied $\forall \hat\tau^* \geq 0$ for choices of $k \geq 1$ in $K_{\delta}(t)$. Higher values of $k$ lead to smaller approximation error $e_{\tau} = |\hat\tau - \tau^*|$ for $\tau^* \in [0,T]$, and thus in practice we use $k = 1000$.
\end{Remark}

Since $h_{\hat\tau,ij} \in \mathcal{C}^{1}$, we have by Definition \ref{def: cbf} that if there exists a function $\alpha \in \mathcal{K}_\infty$ such that \eqref{eq: cbf condition} holds then $h_{\hat\tau,ij}$ is a valid CBF. Under such conditions, our ff-CBF may be synthesized with any nominal control law using \eqref{eq.centralized_cbf_qp} for communicating agents or \eqref{eq.decentralized_cbf_qp} for non-communicating agents. Notably, in contrast to when used with a generic CBF the decentralized control law \eqref{eq.decentralized_cbf_qp} guarantees collision avoidance under our ff-CBF $h_{\hat\tau,ij}$ and dynamics \eqref{eq: dynamic bicycle model} (as long as it is feasible) provided that all vehicles deploy \eqref{eq.decentralized_cbf_qp} with $h_{\hat\tau,ij}$ and are not turning, i.e. $\psi_i = \beta_i = 0$. This is due to the fact that $L_fh_{\hat\tau,ij} \rightarrow 0$ occurs\footnote{In our simulations, we have found that $L_fh_{\hat\tau,ij}$ is on the order of the approximation error $e_{\tau} = |\hat\tau - \tau^*| \approx 10^{-9}$ for $\tau^* \in [0,\bar\tau]$, which may be accounted for by subtracting $\varepsilon \approx 10^{-9}$ from the left-hand side of \eqref{eq.decentralized_safety}.} as $\hat\tau \rightarrow \tau^*$, in which case \eqref{subeq.decentralized_qp_joint_constraint} becomes
\begin{equation}\label{eq.decentralized_safety}
    L_{g_i}h_{\hat\tau,ij}\bb{u}_i + \alpha_{ij}(h_{\hat\tau,ij}) \geq 0, \quad \forall i \in \mathcal{A},
\end{equation}
which, for any given two agent pair $i,j$ yields
\begin{equation}\nonumber
    \dot{h}_{\hat\tau,ij} = L_{g_i}h_{\hat\tau,ij}\bb{u}_i + L_{g_j}h_{\hat\tau,ji}\bb{u}_j \geq -\alpha_{ij}(h_{\hat\tau,ij}) - \alpha_{ji}(h_{\hat\tau,ji})
\end{equation}
where $h_{\hat\tau,ij}=h_{\hat\tau,ji}$, which satisfies the forward invariance condition \eqref{eq: forward invariance} and thus prevents collisions. Intuitively, a zero CBF drift term i.e. $L_fh_{\hat\tau,ij}=0$ is explained by the fact that the ff-CBF $h_{\hat\tau,ij}$ is already predicting the future minimum distance between vehicles $i$ and $j$ under zero-acceleration policies, thus in the absence of an acceleration input the prediction is correct and the minimum distance between vehicles is reached at time $t + \hat\tau$.

It is important to note that the zero level set defined by candidate CBF $h_{\hat\tau,ij}$ represents a \textit{virtual} barrier. Specifically, $h_{\hat\tau,ij}(\bb{z}_i,\bb{z}_j)< 0$ does not imply that a collision has occurred ($h_{0,ij}(\bb{z}_i,\bb{z}_j) < 0$), nor does it suggest that one is unavoidable; rather, $h_{\hat\tau,ij}(\bb{z}_i,\bb{z}_j)<0$ implies that a future collision will occur if the zero-acceleration control policy, $\hat{\bb{u}}_k$, is applied uniformly by each vehicle $k \in \{i,j\}$. In this sense, it is conservative. This motivates the notion of the relaxed future-focused control barrier function (rff-CBF):
\begin{equation}\label{eq.robust_virtual_cbf}
    H_{ij}(\bb{z}_i,\bb{z}_j) = h_{\hat\tau,ij}(\bb{z}_i,\bb{z}_j) + \alpha_0\left(h_{0,ij}(\bb{z}_i,\bb{z}_j)\right),
\end{equation}
where $\alpha_0 \in \mathcal{K}_\infty$. The zero super-level set of $H_{ij}$ is then
\begin{equation}
    S_{H,ij} = \{(\bb{z}_i,\bb{z}_j) \in \R^{2n} \; | \; H_{ij}(\bb{z}_i,\bb{z}_j) \geq 0\},
\end{equation}
which defines a \textit{relaxed} virtual barrier, i.e. one that is less restrictive than the ff-CBF barrier while maintaining the guarantee of collision avoidance. This is proved in the following result.
% Although \eqref{eq: ffcbf} represents a \textit{virtual} CBF, it is the safe set defined by the \textit{physical} CBF \eqref{eq: nominal D cbf} that must be rendered forward-invariant to guarantee collision avoidance. As such, the relaxed virtual safe set is
% \begin{equation}
%     S_{H,ij} = \{(\bb{z}_i,\bb{z}_j) \in \R^{2n} \; | \; H_{ij}(\bb{z}_i,\bb{z}_j) \geq 0\},
% \end{equation}
% defined implicitly by an rff-CBF:
% \begin{equation}\label{eq.robust_virtual_cbf}
%     H_{ij}(\bb{z}_i,\bb{z}_j) = h_{\hat\tau,ij}(\bb{z}_i,\bb{z}_j) + \alpha_0\left(h_{0,ij}(\bb{z}_i,\bb{z}_j)\right),
% \end{equation}
% with $\alpha_0 \in \mathcal{K}_\infty$. The following theorem shows how an rff-CBF of the form \eqref{eq.robust_virtual_cbf} renders the \textit{physical} zero super-level set of \eqref{eq: nominal D cbf} forward-invariant.
\begin{Theorem}\label{thm.rv_safety}
    Consider two agents, each of whose dynamics are described by \eqref{eq: nonlinear control-affine system}. Suppose that $H_{ij}$ is given by \eqref{eq.robust_virtual_cbf}, and that $H_{ij} \geq 0$ at $t=0$. If there exist control inputs, $\bb{u}_i$ and $\bb{u}_j$, such that the following condition holds, for all $t\geq 0$,
    \begin{equation}\label{eq.robust_virtual_cbf_condition}
    \begin{aligned}
        \sup_{\substack{\bb{u}_i \in \mathcal{U}_i\\ \bb{u}_j\in\mathcal{U}_j}}\left[L_{f_i}H_{ij}+ L_{f_j}H_{ij}+L_{g_i}H_{ij}\bb{u}_i + L_{g_j}H_{ij}\bb{u}_j\right] \geq 0,
    \end{aligned}
    \end{equation}
    for all $\bb{z} \in \partial S_{H,ij}$, then, the \textit{physical} safe set defined by $S_{0,ij}(t) = \{(\bb{z}_i,\bb{z}_j) \in \R^{2n}\; | \; h_{0,ij}(\bb{z}_i,\bb{z}_j) \geq 0\}$ is forward-invariant under $\bb{u}_i$, $\bb{u}_j$, i.e. there is no collision between agents $i$ and $j$.
\end{Theorem}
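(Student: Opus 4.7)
The plan is a two-step reduction: (i) lift the hypothesis to forward invariance of the \emph{virtual} set $S_{H,ij}$ via Nagumo's theorem, then (ii) establish the pointwise inclusion $S_{H,ij}\subseteq S_{0,ij}$, which immediately forces forward invariance of the \emph{physical} safe set $S_{0,ij}$.

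For step (i), I would first note that $H_{ij}\in\mathcal{C}^1$, as a sum of $h_{\hat\tau,ij}\in\mathcal{C}^1$ (Theorem \ref{thm: differentiable ffcbf}(i)), the smooth polynomial $h_{0,ij}$, and $\alpha_0\in\mathcal{K}_\infty$. Treating $(\bb{z}_i,\bb{z}_j)\in\R^{2n}$ as a single joint state driven by the concatenation of the two control-affine systems, the bracketed quantity in \eqref{eq.robust_virtual_cbf_condition} is exactly $\sup_{\bb{u}_i,\bb{u}_j}\dot H_{ij}$. Hence \eqref{eq.robust_virtual_cbf_condition} is precisely the Nagumo boundary condition \eqref{eq: forward invariance} for $S_{H,ij}$, and combined with $H_{ij}(\bb{z}(0))\ge 0$, Lemma \ref{lma: nagumos thm} yields $H_{ij}(\bb{z}_i(t),\bb{z}_j(t))\ge 0$ for all $t\ge 0$.

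For step (ii), I would invoke Theorem \ref{thm: differentiable ffcbf}(ii) (with $k\ge 1$ per the Remark, so that $\hat\tau\le 2\hat\tau^*$) to obtain $h_{\hat\tau,ij}\le h_{0,ij}$. Substituting this bound into $H_{ij}\ge 0$ gives
\begin{equation*}
\alpha_0(h_{0,ij}) \;\ge\; -h_{\hat\tau,ij} \;\ge\; -h_{0,ij},
\end{equation*}
i.e.\ $s+\alpha_0(s)\ge 0$ evaluated at $s=h_{0,ij}$. Since $\alpha_0\in\mathcal{K}_\infty$ is strictly increasing through the origin, the map $s\mapsto s+\alpha_0(s)$ is also strictly increasing with unique zero at $s=0$, forcing $h_{0,ij}\ge 0$ and hence $(\bb{z}_i,\bb{z}_j)\in S_{0,ij}$.

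The delicate point I expect to be the main obstacle is justifying the bound $h_{\hat\tau,ij}\le h_{0,ij}$ uniformly along trajectories, since Theorem \ref{thm: differentiable ffcbf}(ii) requires $\hat\tau\le 2\hat\tau^*$. This is automatic whenever the agents are approaching ($\hat\tau^*\ge 0$), and a downward crossing of $\partial S_{0,ij}$ can only occur in exactly this regime, because $\tfrac{d}{dt}h_{0,ij}=2(\xi\cdot\nu)\le 0$ at such a crossing forces $\xi\cdot\nu\le 0$ and thus $\hat\tau^*\ge 0$. In the complementary separating regime the sigmoidal smoothing in \eqref{eq: tau hat} drives $\hat\tau$ to $0$ so that $h_{\hat\tau,ij}$ coincides with $h_{0,ij}$ and the inequality holds trivially. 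A clean packaging of these cases is a proof by contradiction at the first time $h_{0,ij}$ would drop to $0$.
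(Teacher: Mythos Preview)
Your proposal is correct and, in its final packaging (contradiction at the first time $h_{0,ij}$ would drop to $0$), coincides with the paper's proof; your step~(i), making the Nagumo argument for $S_{H,ij}$ explicit, is actually cleaner than the paper, which leaves this implicit.

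One caution worth recording: the pointwise inclusion $S_{H,ij}\subseteq S_{0,ij}$ that you state as step~(ii) is not literally true, because the bound $h_{\hat\tau,ij}\le h_{0,ij}$ can fail outside the approaching regime. For instance, at $\hat\tau^{*}=0$ (i.e.\ $\xi\cdot\nu=0$) one has $\hat\tau=\bar\tau\,K_{\bar\tau}(0)>0$ while $\tau^{*}=0$, so $h_{\hat\tau,ij}-h_{0,ij}=\hat\tau^{2}\|\nu\|^{2}>0$; thus the sigmoid does not drive $\hat\tau$ \emph{exactly} to $0$ in the separating regime and your ``inequality holds trivially'' sentence is only approximate. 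You correctly identify this as the delicate point and resolve it the right way: at a downward crossing of $\partial S_{0,ij}$ one has $\dot h_{0,ij}=2(\xi\cdot\nu)<0$, hence $\hat\tau^{*}>0$, and then Theorem~\ref{thm: differentiable ffcbf}(ii) with the Remark yields $h_{\hat\tau,ij}<h_{0,ij}=0$, contradicting $h_{\hat\tau,ij}=H_{ij}-\alpha_0(0)=H_{ij}\ge 0$. This is exactly the paper's contradiction; your $s\mapsto s+\alpha_0(s)$ monotonicity step is a nice device but is not needed at the boundary since $\alpha_0(0)=0$ collapses the identity directly.
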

\begin{proof}
    In order to show that $S_{0,ij}$ is rendered forward-invariant by \eqref{eq.robust_virtual_cbf_condition}, we must show that \eqref{eq.robust_virtual_cbf_condition} implies that $\dot{h}_{0,ij} \geq 0$ whenever $h_{0,ij} = 0$. We will prove this by contradiction. 
    
    Suppose that $H_{ij},h_{0,ij}=0$, and that \eqref{eq.robust_virtual_cbf_condition} holds but $\dot{h}_{0,ij} < 0$. Then, it follows that $\dot{h}_{0,ij} = 2(\xi_x\nu_x + \xi_y\nu_y) < 0$, which by \eqref{eq: tau hat} implies that
    % implies that $\nu_x^2 + \nu_y^2 > 0$ and thus that 
    $\hat\tau>0$.
    % , where $\hat\tau$ is given by \eqref{eq: tau hat}. 
    With $\hat\tau>0$, it follows that $h_{\hat\tau,ij} < h_{0,ij} = 0$. However, we have assumed that $H_{ij},h_{0,ij} = 0$, which means by definition that $h_{\hat\tau,ij}=0$. Thus, we have reached a contradiction. It follows, then, that \eqref{eq.robust_virtual_cbf_condition} implies that $\dot{h}_{0,ij} \geq 0$ whenever $h_{0,ij} = 0$. As such, $S_{0,ij}$ is rendered forward-invariant.
\end{proof}
As a result of Theorem \ref{thm.rv_safety}, we can use \eqref{eq.robust_virtual_cbf} to encode safety in the context of a CBF-QP control scheme \eqref{eq.centralized_cbf_qp} or \eqref{eq.decentralized_cbf_qp}. In the ensuing section, we conduct a comparative study on the efficacy of the nominal \eqref{eq: nominal D cbf}, future-focused \eqref{eq: ffcbf}, and relaxed future-focused \eqref{eq.robust_virtual_cbf} CBFs across randomized trials of an automotive intersection crossing problem.

%%%%%%%%%%%%%%%%%%%%%%%%%%%%%%%%%%%%%%%%%%%%%%%%%%%%%%%%%%
%***************** Numerical Case Study *****************%
%%%%%%%%%%%%%%%%%%%%%%%%%%%%%%%%%%%%%%%%%%%%%%%%%%%%%%%%%%

\section{Intersection Case Studies}\label{sec.case study}
In this section, we illustrate the use of our future-focused CBFs for collections of both communicating and non-communicating vehicles in the context of simulated and experimental trials of an unsignaled intersection scenario. We provide code and a selection of videos for both on Github\footnote{Link to Github repository: \href{https://github.com/6lackmitchell/ffcbf-control}{github.com/6lackmitchell/ffcbf-control}}.

\subsection{Centralized Control: Simulated Trials}
In an empirical study on a simulated 4-vehicle unsignaled intersection scenario, we illustrate how using a rff-CBF to control communicating vehicles in a centralized manner improves intersection throughput with promising empirical results on safety and QP feasibility.
We study the varying degrees of success of three different centralized controllers of the form \eqref{eq.centralized_cbf_qp} to solve Problem \ref{prob: main problem}, namely to find
\begin{equation}\label{eq.proposed_controller}
    \bb{u}_i^* = [\omega_i^* \; a_{i}^*]^T, \; \forall i = 1,\hdots,A ,
\end{equation}
where the turning rate is
\begin{equation}
     \omega_i^* = \min\left(\max(\omega_i^0,-\bar{\omega}),\bar{\omega}\right),
\end{equation}
and the accelerations $a_1^*,\hdots,a_A^*$ are computed via 
\begin{subequations}\label{eq.proposed_centralized_cbf_qp}
\begin{align}
    [a_{1}^*\hdots a_{A}^*]^T = \argmin_{[a_1\hdots a_A]} \frac{1}{2}\sum_{i=1}^{A}(a_i&-a_i^0)^2\\
    \textrm{s.t.} \quad \forall i, j=1,\hdots,A, \; j\neq i\nonumber \\
    Aa_i &\leq b, \label{eq.qp_input_constraints}\\ 
    \phi_i + \gamma_ia_i &\geq 0, \label{eq.qp_speed_limit}\\
    \phi_{ij} + \bb{\gamma}_{ij,[i]}a_i +\bb{\gamma}_{ij,[j]}a_j &\geq 0, \label{eq.qp_inter-agent_safety}
\end{align}
\end{subequations}
where $\omega_i^0$ and $a_{i}^0$ denote the nominal inputs computed using LQR (see Appendix \ref{app.LQR} for a detailed explanation), \eqref{eq.qp_input_constraints} encodes input constraints of the form $-\bar{a} \leq a_i \leq \bar{a}$, \eqref{eq.qp_speed_limit} enforces both the road speed limit and requires that vehicles do not reverse, and \eqref{eq.qp_inter-agent_safety} is the collision avoidance condition, where $\phi$ and $\gamma$ are as in \eqref{eq.lfh_lgh_abbrev}. Specifically, the controllers under examination are \eqref{eq.proposed_controller} with 
\begin{enumerate}
    \item 0-CBF: $h_{ij} = h_{0,ij}$ according to \eqref{eq: nominal D cbf}
    \item ff-CBF: $h_{ij} = h_{\hat\tau,ij}$ from \eqref{eq: ffcbf}
    \item rff-CBF: $h_{ij} = H_{ij}$ via \eqref{eq.robust_virtual_cbf}
\end{enumerate}
% 1) $h_{ij} = h_{0,ij}$ according to \eqref{eq: nominal D cbf} (referred to in this section as the 0-CBF), 2) $h_{ij} = h_{\hat\tau,ij}$ from \eqref{eq: ffcbf} (ff-CBF), and 3) $h_{ij} = H_{ij}$ via \eqref{eq.robust_virtual_cbf} (rff-CBF), 
with $\alpha_0(h_{0,ij})=k_0h_{0,ij}$, where $k_0=0.1\max(\hat\tau-1,\varepsilon)$, $\varepsilon=0.001$, the look-ahead horizon $\bar{\tau}=5$s, and $\alpha_{ij}(h_{ij}) = 10h_{ij}$, $\bar{\omega}=\pi/2$, and $\bar{a}_r=9.81$ for all cases. We note that \eqref{eq.proposed_controller} is centralized in the sense that it is assumed that all states, $\bb{z}_i$, and nominal control inputs, $\bb{u}_i^0$, are known. 

For each study, we performed $N=1000$ trials of simulated trajectories of 4 vehicles approaching the intersection from different lanes, all of whose dynamics are described by \eqref{eq: dynamic bicycle model}, using the control scheme described by \eqref{eq.proposed_controller} and a timestep of $dt=0.01$s. At the beginning of each trial, the vehicles were assigned to a lane and their initial conditions were randomized via
\begin{align}
    d_i &= d_0 + U(-\Delta_d,\Delta_d), \nonumber\\
    s_i & = s_0 + U(-\Delta_s,\Delta_s), \nonumber
\end{align}
where $d_i$ denotes the initial distance of vehicle $i$ from the intersection and $s_i$ its initial speed. We chose $d_0=12$m, $\Delta_d=5$m, $s_0=6$m/s, and $\Delta_s=3$m/s, and let $U(a,b)$ denote a sample from the uniform random distribution between $a$ and $b$. For the speed limit, we chose $v_{max}=10$m/s. 

For performance evaluation, we introduce some metrics:
\begin{enumerate}
    \item Success: $\frac{\textrm{Number of Successful Trials}}{\textrm{Number of Trials}}$,\vspace{1.5mm}
    \item Feas.: $\frac{\textrm{Number of Trials in which QP is Always Feasible}}{\textrm{Number of Trials}}$,\vspace{1.5mm}
    \item DLock: $\frac{\textrm{Number of Trials in which Vehicles become deadlocked}}{\textrm{Number of Trials}}$,\vspace{1.5mm}
    \item Unsafe: $\frac{\textrm{Number of Trials Vehicles in which } h_{0,ij}<0}{\textrm{Number of Trials}}$,
\end{enumerate}
where a successful trial is characterized as one where all vehicles exit the intersection at their desired location, a deadlock is characterized as when all vehicles have stopped and remained stopped for 3 sec, and we define ``Avg. Time" as the average time in which the final vehicle reached the intersection exit over all successful trials. 

We examined the performance of each controller under two circumstances: 1) each vehicle seeks to proceed straight through the intersection without turning, and 2) three vehicles seek to proceed straight without turning and one seeks to make a left turn. The results for the 3 different controllers are compiled in Tables \ref{tab.cbf_performance} and \ref{tab.cbf_performance_turning} respectively. Although the 0-CBF in a centralized QP-based control law is known to guarantee safety and QP feasibility under certain conditions \cite{Jankovic2021Collision}, such a controller has no predictive power and is therefore prone to deadlocks.
\begin{table}[!t]
    \vspace{2mm}
    \centering
    \caption{Controller Performance -- All Proceed Straight}\label{tab.cbf_performance}
    \begin{tabular}{|c|c|c|c|c|c|c|c|}
        \hline
        CBF & Success & Feas. & DLock & Unsafe & Avg. Time \\ \hline
        $h_{ij}=h_{0,ij}$ & 0.653 & 1 & 0.347 & 0 & 5.67 \\ \hline
        $h_{ij}=h_{\hat\tau,ij}$ & 1 & 1 & 0 & 0 & 3.45 \\ \hline
        $h_{ij}=H_{ij}$ & 1 & 1 & 0 & 0 & 3.21 \\ \hline
    \end{tabular}\par
\end{table}
\begin{table}[!t]
    \vspace{2mm}
    \centering
    \caption{Controller Performance -- One Left Turn}\label{tab.cbf_performance_turning}
    \begin{tabular}{|c|c|c|c|c|c|c|c|}
        \hline
        CBF & Success & Feas. & DLock & Unsafe & Avg. Time \\ \hline
        $h_{ij}=h_{0,ij}$ & 0.689 & 1 & 0.311 & 0 & 7.75 \\ \hline
        $h_{ij}=h_{\hat\tau,ij}$ & 0.963 & 0.963 & 0 & 0 & 5.33 \\ \hline
        $h_{ij}=H_{ij}$ & 1 & 1 & 0 & 0 & 4.91 \\ \hline
    \end{tabular}\par
\end{table}
We illustrate such a deadlock in Figure \ref{fig.nominal_and_ff_cbf_results}{\color{blue}a}. The ff-CBF-based controller succeeded as long as it was feasible, offering a 39\% reduction in average time over the 0-CBF in the straight scenario and an 31\% time improvement in the turning scenario, but suffered from virtual constraint violations leading to QP infeasibility in the case of turning vehicles, one example of which is shown in Figure \ref{fig.nominal_and_ff_cbf_results}{\color{blue}b}. The rff-CBF controller enjoyed both the same empirical feasibility and safety as the 0-CBF design and improved the average success time to a similar extent as the ff-CBF, specifically by 43$\%$ and 36$\%$ for the straight and turning scenarios respectively. In addition, the rff-CBF control scheme achieved 100$\%$ feasibility even in the turning scenario, despite the constant velocity prediction model not taking a change of heading into account. We leave any theoretical guarantees of feasibility, however, to future work. The state, control, and rff-CBF trajectories for a turning trial are illustrated in Figures \ref{fig.rvcbf_state_trajectories}-\ref{fig.rvcbf_trajectories}. It can be seen from Figure \ref{fig.rvcbf_control_trajectories} that the control actions smoothly take action in advance of any dangerous scenario, and from Figure \ref{fig.rvcbf_trajectories} that both $H_{ij}$ and $h_{0,ij}$ remain non-negative for all $i$, $j$. 

\begin{figure}[!htp]
\vspace{2mm}

\subfloat[]{%
  \includegraphics[clip,trim={0 0 0 15mm},width=0.98\columnwidth]{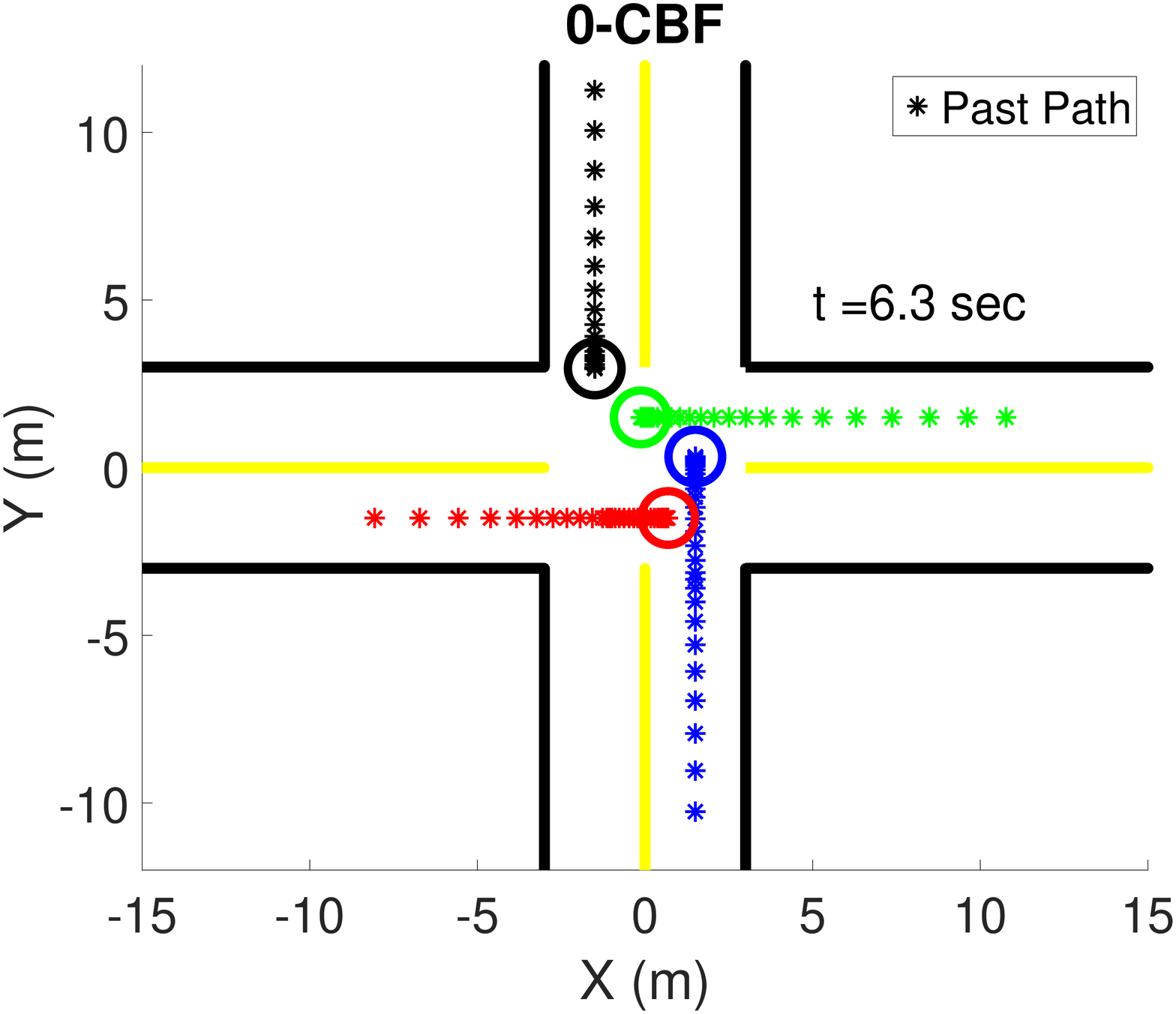}%
}
\vspace{1mm}

\subfloat[]{%
  \includegraphics[clip,trim={0 0 0 15mm},width=0.98\columnwidth]{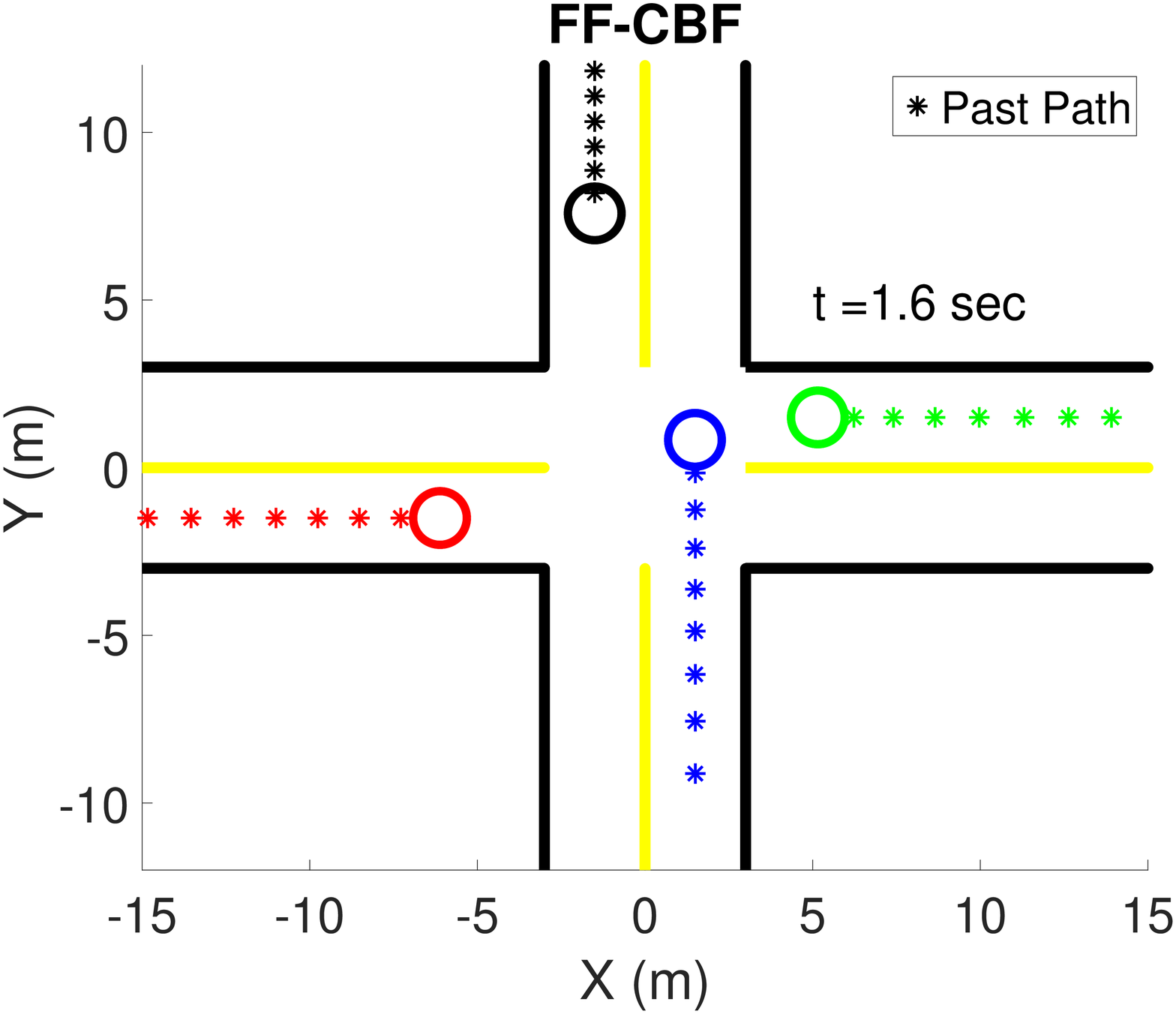}%
}
\vspace{1mm}

\caption{\small{Selected XY trajectories for the intersection crossing problem using (a) 0-CBF (Straight Trial 582) and (b) ff-CBF (Turning Trial 137). In (a), the centralized controller has no predictive power and the vehicles deadlock, whereas in (b) the virtual barrier between blue and black vehicles is violated despite a wide physical margin as the blue vehicle begins to turn left.}}\label{fig.nominal_and_ff_cbf_results}
\vspace{-5mm}
\end{figure}

\begin{figure}[!h]
    \vspace{2mm}
    \begin{minipage}[b]{1\columnwidth}
        \centering
        \includegraphics[width=1\columnwidth,clip,trim={0 0 0 15mm}]{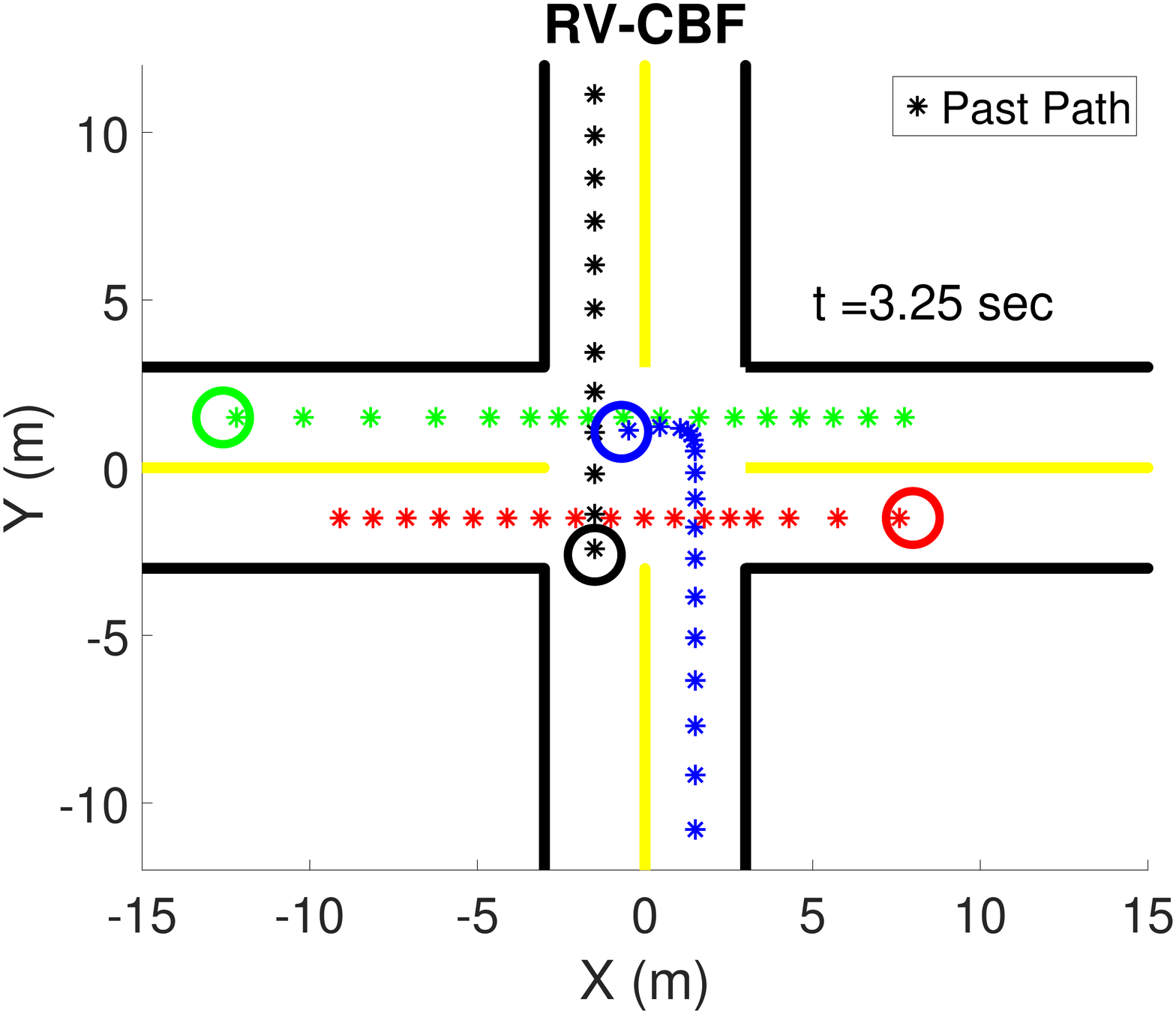}
        \caption{\small{XY trajectories for Trial 650 of the rff-CBF simulation set.}}
        \label{fig.rvcbf_state_trajectories}
        \vspace{5mm}
    \end{minipage}
    \begin{minipage}[b]{1\columnwidth}
        \centering
        \includegraphics[width=1\columnwidth,clip,trim={0 0 0 12mm}]{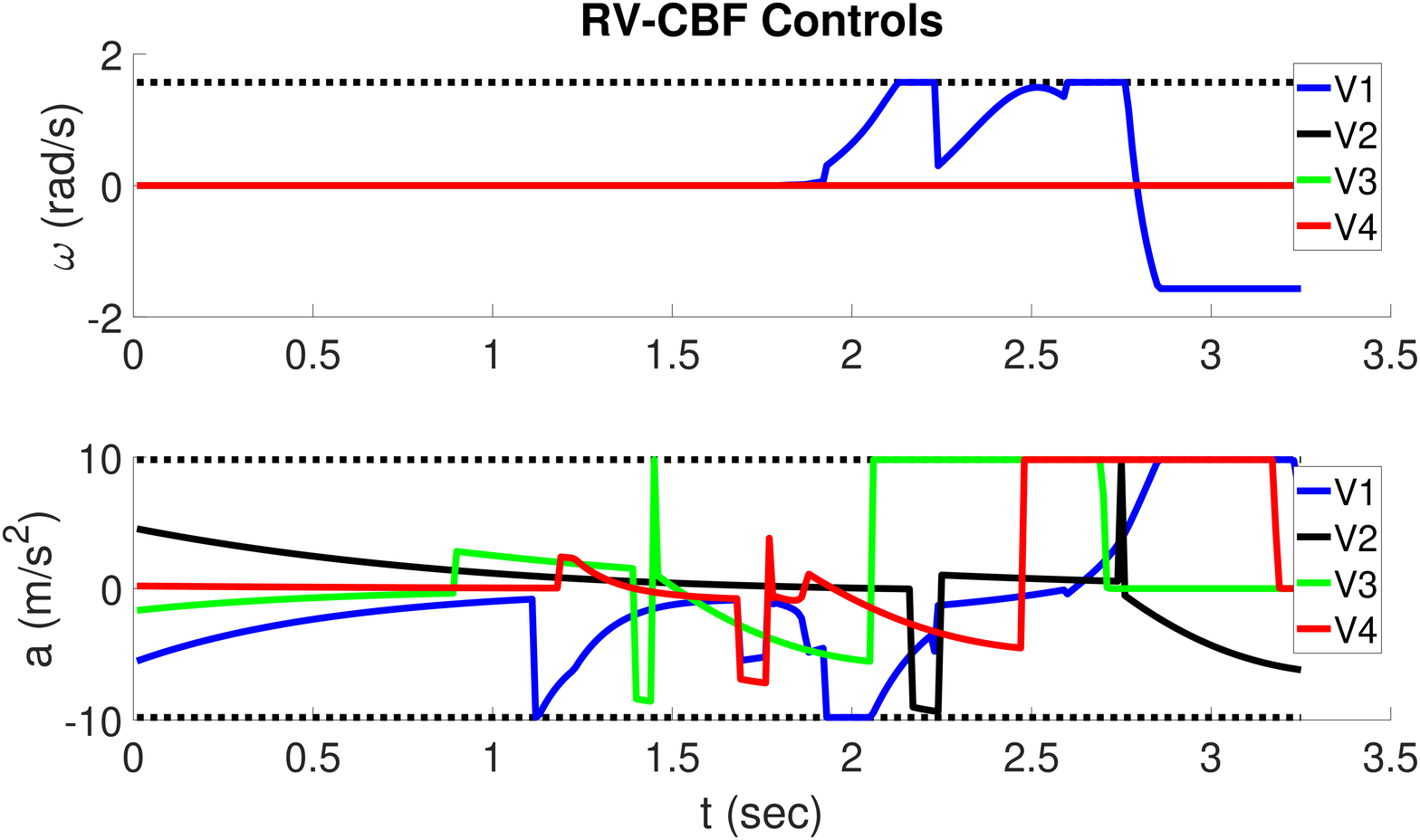}
        \caption{\small{Control solutions for Trial 650 of rff-CBF simulation set.}}
        \label{fig.rvcbf_control_trajectories}
        \vspace{5mm}
    \end{minipage}
    \begin{minipage}[b]{1\columnwidth}
        \centering
        \includegraphics[width=1\columnwidth,clip,trim={0 0 0 12mm}]{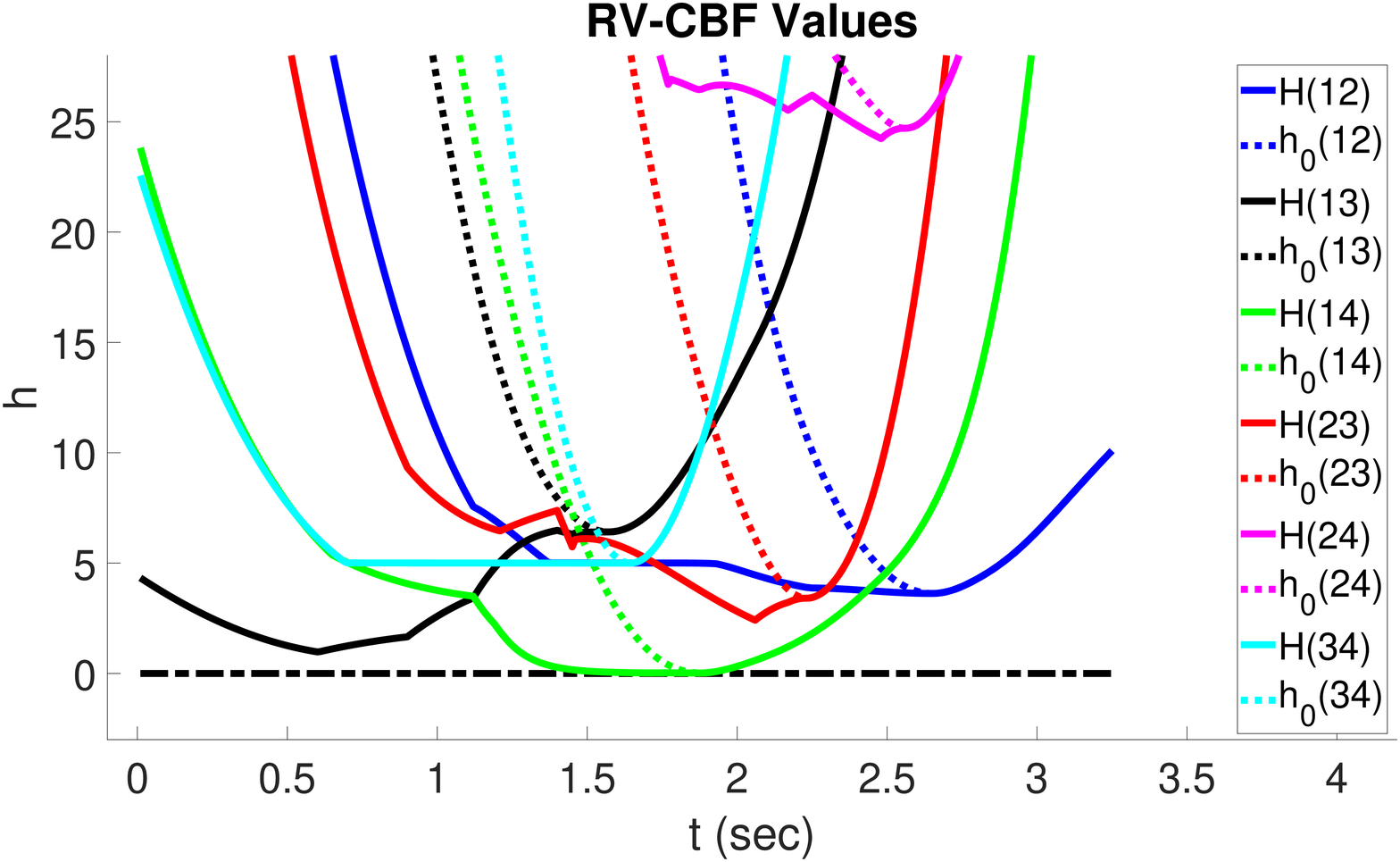}
        \caption{\small{rff-CBF ($H$) and 0-CBF ($h_0$) trajectories for rff-CBF Trial 650. $(ij)$ denote that CBF is evaluated for vehicles $i$ and $j$.}}
        \label{fig.rvcbf_trajectories}
    \end{minipage}
    \vspace{-5mm}
\end{figure}

\subsection{Decentralized Control: Rover Experiments}
We further demonstrated the success of our decentralized rff-CBF-QP controller on a collection of AION R1 UGV rovers in an intersection scenario in the lab. Each of the 5 rovers was asked to proceed straight through the intersection while obeying a speed limit (encoded via \eqref{eq: CBF speed limit}) and avoiding collisions with each other (using rff-CBF \eqref{eq.robust_virtual_cbf}). Assuming the rovers behaved according to the bicycle model \eqref{eq: dynamic bicycle model}, we used a controller of the form \eqref{eq.decentralized_cbf_qp} to compute acceleration $a_i$ and angular rate $\omega_i$ inputs in order to send velocity $v_i(t_{k+1}) = v_i(t_k) + a_i \Delta t$ and $\omega_i$ commands to the rovers' on-board PID controllers. The full control loop ran at a frequency of 20Hz, where the nominal input $\bb{u}_0$ was computed using the LQR law outlined in Appendix \ref{app.LQR}, position feedback was obtained using a Vicon motion capture system, and an extended Kalman filter was used via the on-board PX4 for state estimation.
\begin{figure}[!h]
    \centering
        \includegraphics[width=\columnwidth,clip]{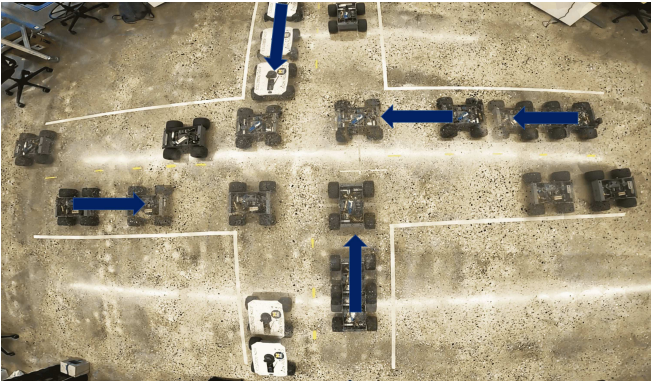}
    \caption{\small{Five rovers safely traverse a four-way intersection in the laboratory environment using a decentralized rff-CBF-QP control law. The rovers at their initial positions are marked with arrows pointing in the direction of motion.}}\label{fig.rovers_skyview}
    \vspace{-3mm}
\end{figure} 

As shown in Figure \ref{fig.rovers_skyview}, our rff-CBF based controller succeeds in driving the vehicles safely through the intersection without encountering a deadlocked situation. The video footage available at the above GitHub link shows that, contrary to behavior expected using traditionally myopic, present-focused CBFs, some rovers accelerated into the intersection in order to avoid predicted future collisions whereas others braked to await their turn.

%%%%%%%%%%%%%%%%%%%%%%%%%%%%%%%%%%%%%%%%%%%%%%%%%%%%%%%%%%
%********************** Conclusion **********************%
%%%%%%%%%%%%%%%%%%%%%%%%%%%%%%%%%%%%%%%%%%%%%%%%%%%%%%%%%%

\section{Conclusion}\label{sec.conclusion}
In this paper, we introduced advancements to traditionally myopic control barrier function based safe control in the form of novel future-focused (ff-) and relaxed future-focused (rff-) CBFs. We then studied their efficacy on a simulated intersection crossing problem for a collection of automobiles modeled as bicycles and controlled by a centralized CBF-QP-based controller under three different CBFs, and discovered that the rff-CBF produced the most favorable empirical results. We further validated our proposed approach on a collection of 5 ground rovers in an intersection scenario in the lab environment. In the future, we plan to further investigate 1) how rff-CBFs and control Lyapunov functions may be combined to make formal guarantees on stabilization and safety, and 2) under what conditions the nominal CBF-QP controller remains feasible when the future-focused CBF-QP may not be.

\bibliographystyle{IEEEtran}
\bibliography{myreferences,refs_intersectionmanagement,refs_carsims,refs_misc,refs_safetycriticalsystems,refs_FTSandFxTS,refs_AdaptiveControl,refs_learningbasedcontrol,refs_marketreports,refs_motionplanners}

%%%%%%%%%%%%%%%%%%%%%%%%%%%%%%%%%%%%%%%%%%%%%%%%%%%%%%%%%%%%%%%%%%%%%%%%%%%%%%%%
%********************************* Appendices *********************************%
%%%%%%%%%%%%%%%%%%%%%%%%%%%%%%%%%%%%%%%%%%%%%%%%%%%%%%%%%%%%%%%%%%%%%%%%%%%%%%%%

\appendices
\section{LQR-based Nominal Control Law}\label{app.LQR}
For each vehicle, we assume that a desired state trajectory, $\bb{q}_i^*(t) = [x_i^* \; y_i^* \; \dot{x}_i^* \; \dot{y}_i^*]^T$, is available. Then, we define the modified state vector and tracking error as $\bb{\zeta}_{i}(t) = [x_i \; y_i \; \dot{x}_i \; \dot{y}_i]^T$, and $\tilde{\bb{\zeta}}_i(t) = \bb{\zeta}_{i}(t)-\bb{q}_i^*(t)$ respectively. We then compute the optimal LQR gain, $K$, for a planar double integrator model and compute $\bb{\mu} = [a_{x,i} \; a_{y,i}]^T = -K\Tilde{\bb{\zeta}}_i$. Then, we map $a_{x,i}$, $a_{y,i}$ to $\omega_i^0$, $a_{i}^0$ via
\begin{align}\nonumber
    \begin{bmatrix}
    \omega_i^0 \\ a_{i}^0
    \end{bmatrix} = S^{-1}\begin{bmatrix}
    a_{x,i} + \dot{y}_i\dot{\psi}_i \\ a_{y,i}-\dot{x}_i\dot{\psi}_i
    \end{bmatrix},
\end{align}
where
\begin{equation}\nonumber
    S = \begin{bmatrix}
    -v_{i}\sin(\psi_i)\sec^2(\beta_i) & \cos(\psi_i) - \sin(\psi_i)\tan(\beta_i) \\ v_{i}\cos(\psi_i)\sec^2(\beta_i) & \sin(\psi_i) + \cos(\psi_i)\tan(\beta_i)
    \end{bmatrix},
\end{equation}
the inverse of which exists as long as $v_{i} \neq 0$. Therefore, if $|v_{i}| < \epsilon$, where $0 < \epsilon \ll 1$, we assign $\omega_i^0 = 0$ and $a_{i}^0 = \sqrt{a_{x,i}^2 + a_{y,i}^2}$.

\end{document}